\documentclass{amsart}

\usepackage[dvips]{graphicx}
\usepackage{xcolor}

\theoremstyle{plain}
\newtheorem{theorem}{Theorem}
\newtheorem{lemma}[theorem]{Lemma}
\newtheorem{corollary}[theorem]{Corollary}

\theoremstyle{definition}

\theoremstyle{remark}
\newtheorem{remark}[theorem]{Remark}
\newtheorem*{acknowledgements}{Acknowledgements}

\newcommand{\del}{\partial}

\newcommand{\N}{\mathbb{N}}

\begin{document}

\title{Strongly irreducible Heegaard splittings of hyperbolic 3-manifolds}

\author[Kalelkar]{Tejas Kalelkar}
\address{Mathematics Department, Indian Institute of Science Education and Research, Pune 411008, India}
\email{tejas@iiserpune.ac.in}

\date{\today}

\keywords{}

\subjclass[2010]{Primary 57M50, 57M99}

\begin{abstract}
Colding and Gabai have given an effective version of Li's theorem that non-Haken hyperbolic 3-manifolds have finitely many irreducible Heegaard splittings. As a corollary of their work, we show that Haken hyperbolic 3-manifolds have a finite collection of strongly irreducible Heegaard surfaces $S_i$ and incompressible surfaces $K_j$ such that any strongly irreducible Heegaard surface is a Haken sum $S_i + \sum_j n_j K_j$, up to one-sided associates of the Heegaard surfaces.
\end{abstract}

\maketitle

Colding and Gabai\cite{ColGab} used negatively curved branched surfaces that carry all index $\leq$ 1 surfaces to show that non-Haken hyperbolic 3-manifolds have finitely many irreducible Heegaard surfaces. They end with a conjecture for Heegaard splittings of Haken manifolds (Question 7.10 of \cite{ColGab}). In this short note we show that in fact their proof essentially resolves this conjecture for strongly irreducible splittings of hyperbolic manifolds, stated below as a theorem:

\begin{theorem}\label{mainthm}
Let $M$ be a closed hyperbolic manifold. There exist finitely many surfaces $S_1,..., S_n$ which are either strongly irreducible Heegaard surfaces or the one-sided associates of strongly irreducible Heegaard surfaces and finitely many incompressible surfaces $K_1, ..., K_m$ such that if $S$ is any strongly irreducible Heegaard surface then for some $S_i$ either $S=S_i+\sum_j n_j K_j$ or it has a one-sided associate $S'$ and $S'=S_i +\sum_j n_j K_j$. Furthermore, if the surface $K=\sum_j n_j K_j$ is non-empty then it is incompressible.
\end{theorem}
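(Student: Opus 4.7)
The strategy is to combine the negatively curved branched surface machinery of Colding--Gabai with a Haken normal surface decomposition.

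First, I would invoke Pitts--Rubinstein min--max theory to isotope any strongly irreducible Heegaard surface $S$ of $M$ (or, in the case where one handlebody is a twisted $I$-bundle over a closed non-orientable surface, its one-sided associate $S'$) to a closed embedded minimal surface of Morse index at most $1$. The Colding--Gabai theorem then supplies a finite collection of negatively curved branched surfaces $B_1,\ldots,B_N$ in $M$ that together carry every such index-at-most-$1$ minimal surface. Hence every strongly irreducible Heegaard surface of $M$, up to passage to a one-sided associate, is fully carried by some $B_\ell$.

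Second, for each $B_\ell$ the integer-weighted closed embedded surfaces carried by $B_\ell$ form a rational polyhedral cone with finitely many fundamental generators $F^{(\ell)}_k$, every carried surface being a nonnegative integer Haken sum of these generators. Using the negative curvature and the index-at-most-$1$ carrying hypothesis on $B_\ell$, I would argue that each fundamental surface is either incompressible in $M$ or is itself a strongly irreducible Heegaard surface (possibly a one-sided associate thereof); a compressing disk for a carried fundamental would, combined with the branched surface, force a carried surface of index exceeding $1$, contradicting the carrying hypothesis. Collecting the incompressible fundamentals as $\{K_j\}$ and the Heegaard-type fundamentals as $\{S_i\}$ over all $\ell$ produces two finite lists of the required types.

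Third, given a strongly irreducible Heegaard surface $S$ carried by $B_\ell$, I would write its weight vector as $w(S) = \sum_k a_k w(F^{(\ell)}_k)$ and show that exactly one Heegaard-type generator appears, and with multiplicity one. Existence of at least one Heegaard-type summand is forced because otherwise $S$ would be a Haken sum of incompressible surfaces and hence itself incompressible, contradicting that $S$ is a Heegaard surface. Uniqueness and unit multiplicity would follow from the strong irreducibility of $S$: a Haken sum of two Heegaard-type generators would produce disjoint compressing disks on opposite sides of $S$, incompatible with strong irreducibility. This yields the asserted decomposition $S = S_i + \sum_j n_j K_j$.

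Finally, for the incompressibility of $K = \sum_j n_j K_j$ when nonempty, I would note that $K$ is carried by $B_\ell$ as a Haken sum of incompressible fundamentals only, so any compressing disk for $K$, after isotoping its boundary into a single leaf of the fibered neighborhood of $B_\ell$, would descend to a compressing disk for some $K_j$, contradicting its incompressibility. The hardest step will be the uniqueness in the third paragraph: it requires deploying strong irreducibility in a sharp enough Haken-sum and compressibility analysis to rule out multiple Heegaard-type summands, and it is where the hypothesis on $S$ does essential work beyond mere carrying by $B_\ell$.
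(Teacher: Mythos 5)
Your overall starting point (Colding--Gabai's negatively curved branched surfaces carrying all index $\leq 1$ surfaces, hence all strongly irreducible Heegaard surfaces up to one-sided associates) matches the paper, but the core of your argument rests on a claim that is not justified and that the paper deliberately avoids: that every fundamental surface of such a branched surface is either incompressible or itself a strongly irreducible Heegaard surface (or a one-sided associate), and that any carried Heegaard surface uses exactly one ``Heegaard-type'' generator with multiplicity one. Fundamental surfaces of a branched surface fully carrying Heegaard surfaces need not be fully carried themselves, so the negative-curvature and index bounds do not apply to them, and there is no reason a compressing disk for a fundamental surface should propagate to a carried surface of index $>1$. Likewise, Haken sum decompositions into fundamentals are not unique, and your sketch of why two Heegaard-type summands would yield disjoint compressing disks on opposite sides is not an argument. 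These are the load-bearing steps of your proof and they are gaps, not details.

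The paper's route is structured quite differently and sidesteps both issues. The Claim in Colding--Gabai's Theorem 6.3 (restated as Theorem \ref{CG}) provides, for each Heegaard surface carried by $E$, a decomposition $S = S_1 + S_2$ in which $S_1$ has coefficients bounded by a universal $K^*$ on the fundamentals of $E$ (so only finitely many possibilities for $S_1$) and $S_2$ is carried by an \emph{incompressible} sub-branched surface $L$ with no monogons and no disks of contact. For each fixed $S_1$, a Dickson's-lemma-type combinatorial statement (Lemma \ref{comb}) selects finitely many \emph{actual} Heegaard surfaces $S^i$ from the family so that every other member differs from some $S^i$ by a surface with non-negative coefficients carried by $L$; the base surfaces $S_i$ of the theorem are these Heegaard surfaces, not fundamental surfaces of $E$. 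Incompressibility of the difference $K$ then follows from Oertel's theorem applied to $L$, using the verified hypotheses (no monogons, no disks of contact), rather than from a descent-to-fundamentals argument. To repair your proposal you would essentially need to import both of these ingredients: the bounded/$L$-carried splitting of the coefficient vector, and the combinatorial selection of base surfaces from the Heegaard surfaces themselves.
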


A restatement of the Claim in the proof of Theorem 6.3 of \cite{ColGab} is the following:
\begin{theorem}\label{CG}
Let $M$ be a closed hyperbolic manifold. There exists $K^* >0$ and finitely many pairs of branched surfaces $(E_i, L_i)$ with $L_i$ a (possibly empty) sub-branch surface of $E_i$ such that for each strongly irreducible Heegaard surface $S$ of $M$ there exists a pair $(E, L)$ from this finite collection such that
\begin{enumerate}
\item{$S$ or an associated one-sided surface $S'$ is fully carried by $E$.}
\item{$S=S_1 + S_2$ or $S'=S_1 + S_2$, where $S_2$ is fully carried by $L$ and $S_1 = \sum n_i F_i$ where $F_i$ are the fundamental surfaces carried by $E$ and $0 \leq n_i < K^*$.}
\item{If $L \neq \emptyset$ then $L$ is incompressible, carries no torus, has no complementary monogons and has no disks of contact.}
\end{enumerate}
\end{theorem}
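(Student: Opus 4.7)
The plan is to obtain Theorem~\ref{CG} by combining the branched surface construction of Colding-Gabai with classical Haken-Oertel style normal surface theory for branched surfaces. The main input is the result from \cite{ColGab} that produces finitely many pinched-negatively-curved branched surfaces $E_1,\ldots,E_N\subset M$ that fully carry every minimal surface of index $\leq 1$. Since any strongly irreducible Heegaard surface admits a minimal representative of index $\leq 1$ via min-max applied to its associated sweep-outs (with the caveat that a min-max limit may be a one-sided minimal surface whose double cover is the Heegaard surface, accounting for the one-sided associate $S'$), part (1) is immediate: some $E_i$ fully carries $S$ or $S'$.

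For each $E_i$, the surfaces fully carried by $E_i$ form a finitely generated integer cone spanned by fundamental surfaces $F_1^{(i)},\ldots,F_{r_i}^{(i)}$, and $S$ (or $S'$) has a unique expansion $\sum_j n_j F_j^{(i)}$. Fix a uniform threshold $K^*$ to be chosen below. Split the decomposition by the size of the coefficients, setting $S_1=\sum_{n_j<K^*} n_j F_j^{(i)}$ and $S_2=\sum_{n_j\geq K^*} n_j F_j^{(i)}$, and let $L\subseteq E_i$ be the sub-branched surface consisting of those sectors of $E_i$ that are non-trivially used by $S_2$. Since each $E_i$ has only finitely many sub-branched surfaces, only finitely many pairs $(E,L)$ occur across all strongly irreducible Heegaard surfaces; this gives the finite list of pairs $(E_i,L_i)$ and establishes (2) modulo the existence of a suitable $K^*$.

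The delicate step is (3): choosing $K^*$ uniformly so that every $L$ that actually arises is incompressible, carries no torus, has no complementary monogons, and no disks of contact. The strategy is that a defect in $L$ must survive in $S$ when the coefficients of the fundamentals carried by $L$ are large. An essential compressing disk for $L$ gives a compressing disk for $S_2$, and for sufficiently large multiplicities one obtains a compressing disk for $S$ on one side of the Haken sum, while a dual disk is produced from $S_1$, together violating strong irreducibility. A torus carried by $L$ would contribute a non-negative Euler characteristic summand to $S$, conflicting with the universal genus bound on strongly irreducible Heegaard splittings of $M$ and the index-$\leq 1$ minimal surface constraint. Disks of contact can be eliminated by a standard combinatorial simplification of $L$ that does not alter the set of carried surfaces, and complementary monogons can be absorbed by enlarging $L$ slightly within $E_i$; retaining only the irreducible outputs of these reductions yields the finite collection in the statement.

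The main obstacle is quantifying the uniform threshold $K^*$. This requires a universal bound on the genus of strongly irreducible Heegaard surfaces of $M$ (implicit in \cite{ColGab} via the hyperbolic geometry), together with a careful accounting of how large the multiplicity of a fundamental surface $F_j^{(i)}$ must become before the topological defect of its supporting sub-branched surface is forced to appear in $S$. Making these two quantifications compatible across all finitely many $E_i$ and all of their sub-branched surfaces simultaneously is the heart of the Claim in the proof of Theorem~6.3 of \cite{ColGab}, and is where one expects the bulk of the work to lie.
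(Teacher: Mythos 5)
Your overall strategy coincides with the paper's: both treat the statement as essentially a repackaging of the Claim in the proof of Theorem~6.3 of \cite{ColGab} (finitely many negatively curved branched surfaces carrying all index $\leq 1$ minimal representatives, a coefficient-size splitting $S = S_1 + S_2$ with $S_2$ carried by a sub-branched surface $L$, and incompressibility of $L$ extracted from the weak-reducibility argument in the remainder of that proof). Your sketch of the incompressibility step --- a compressing disk for $L$ propagating, at large multiplicities, to a pair of disks on opposite sides of $S$ --- is the right idea and matches what the paper defers to \cite{ColGab}.

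However, there is a concrete gap in your treatment of part (3). Your argument that $L$ carries no torus is wrong: a torus has Euler characteristic zero, so adding torus summands to a Haken sum does not change $\chi(S)$ at all, and hence cannot conflict with any genus bound on strongly irreducible Heegaard surfaces. The paper's (and Colding--Gabai's) actual argument is geometric: the $L_i$ are sub-branched surfaces of splittings of a $-1/2$-negatively curved branched surface, so by Gauss--Bonnet every surface they carry has strictly negative Euler characteristic, which excludes tori outright. Similarly, your proposal to remove complementary monogons ``by enlarging $L$ slightly within $E_i$'' and to kill disks of contact by an unspecified simplification is problematic: the statement requires the \emph{same} $L$ to both fully carry $S_2$ and enjoy these properties, and modifying $L$ within $E$ changes the set of carried surfaces, so you would have to re-establish part (2) for the modified $L$. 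The paper instead gets these properties for free from the construction: no component of $\partial_h N(L_i)$ is a disk or annulus, so by Lemma 5.5 of \cite{ColGab} a complementary monogon would force a compressing disk, which is then ruled out by the incompressibility already established. You should replace your Euler-characteristic argument with the negative-curvature one and derive the monogon/disk-of-contact conditions from the structure of $\partial_h N(L)$ rather than by modifying $L$.
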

\begin{proof}
Take the finite collection $(E_i, L_i)$ of branch surfaces as in the Claim in the proof of Theorem 6.3 of \cite{ColGab}. As $L_i$ are sub-branch surfaces of splittings of a $-1/2$-negatively curved branch surface so they carry no tori. As no component of $\del_h N(L_i)$ is a disk or annulus so if a complementary component of $N(L_i)$ has a monogon, it has a compression disk (Lemma 5.5 of \cite{ColGab}). If any $E$ carries infinitely many strongly irreducible Heegaard surfaces $H_n$ and the corresponding $L$ is compressible then the rest of the proof of Theorem 6.3 of \cite{ColGab} shows that some $H_n$ is in fact weakly reducible.
\end{proof}

In the following lemma, we use the notation $\N$ to denote non-negative integers.
\begin{lemma}\label{comb}
Given a sequence $\{H_n\}$ of $\N^k$, there exists a finite subset $\{S_i\}$ of $\{H_n\}$ such that for each $n \in \N$ there exists some $S_i$ with $H_n -S_i \geq 0$.\end{lemma}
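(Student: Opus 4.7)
The statement is a standard consequence of \emph{Dickson's lemma}: the coordinatewise partial order on $\N^k$ is a well-partial-order, equivalently every infinite sequence in $\N^k$ contains indices $i < j$ with $v_i \leq v_j$ coordinatewise.

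Given this, I would extract the $\{S_i\}$ by a greedy procedure. Pick $S_1 = H_{n_1}$ for any $n_1$; having chosen $S_1,\dots,S_r$ from the sequence, if there exists some $H_n$ with $H_n \not\geq S_i$ for every $i \leq r$, set $S_{r+1}$ to be any such $H_n$, and otherwise stop. When the process stops, every $H_n$ coordinatewise dominates at least one chosen $S_i$, which is precisely the desired statement $H_n - S_i \geq 0$. Note that by construction each $S_i$ is a term of the original sequence, as required.

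The only point to verify is termination. If the process continued indefinitely, the infinite sequence $S_1, S_2, \ldots$ in $\N^k$ would satisfy $S_j \not\geq S_i$ for all $i < j$, contradicting Dickson's lemma. There is no real obstacle beyond invoking this classical fact; if one prefers a self-contained argument, Dickson's lemma itself is a short induction on $k$ (for $k=1$ take $S_1 = \min_n H_n$; for larger $k$, fix $S_1 = H_{n_1}$ and for each pair $(j,c)$ with $0 \leq c < S_1^{(j)}$ apply the inductive hypothesis to the subsequence of those $H_n$ with $H_n^{(j)} = c$, taking the union of the resulting finite collections together with $S_1$).
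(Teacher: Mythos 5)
Your proposal is correct. The lemma is exactly the statement that the coordinatewise order on $\N^k$ is a well-partial-order (every subset of $\N^k$ has finitely many minimal elements), and your greedy extraction plus the termination argument via Dickson's lemma is a complete proof: an infinite run of the procedure would produce a sequence with no pair $i<j$ satisfying $S_i \leq S_j$, which Dickson's lemma forbids. The paper instead gives a self-contained induction on $k$: assuming the result for $k-1$, it first finds a finite covering set $\{S_i\}$ ignoring the last coordinate, and then, for each value $m$ below $M=\max_i S_i(k)$, applies the inductive hypothesis again to the subsequence whose last coordinate equals $m$; the union of all these finite sets works. Your parenthetical self-contained argument (fix $S_1$ and split on which coordinate of $H_n$ falls below the corresponding coordinate of $S_1$) is a close cousin of the paper's induction, just organized around a single reference point rather than a covering family. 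The trade-off is minor: citing Dickson's lemma is shorter and identifies the canonical choice of $\{S_i\}$ as the minimal elements, while the paper's version keeps the note self-contained and elementary. Both arguments are equally (non-)constructive, so neither affects the paper's later remark on effectivity.
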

\begin{proof}
We proceed by induction on $k$. When $k=1$, take $S$ as the minimum of $H_n$ so that $H_n - S \geq 0$. Assuming the statement for $k-1$, we ignore the $k$-th coordinate of the sequence $\{H_n\}$, so that there exist finitely many $S_i$ in the sequence such that for any $H_n$ there exists $S_i$ such that $H_n(j) \geq S_i(j)$ for $j=1...(k-1)$ where $H_n(j)$ and $S_i(j)$ denote the $j$-th component of $H_n$ and $S_i$ respectively. Let $M$ be the maximum of the finite set $\{S_i(k)\}$. For each $m<M$, consider the subsequence $\{H_n^{(m)}\}$ of $\{H_n\}$ where $H_n^{(m)}(k)=m$.  Again ignoring the last coordinate we get finitely many $S_i^{(m)}$'s in the sequence such that for all $n\in \N$ there exists some $S_i^{(m)}$ such that $H_n^{(m)}(j) \geq S^{(m)}_i (j)$ for $j=1...(k-1)$ and $H_n^{(m)}(k)=S_i^{(m)}(k)=m$. 

We claim that the union of the sets $\{S_i\}$, $\{S_i^{(1)}\}$, ..., $\{S_i^{(M-1)}\}$ is the required finite collection for $k$. For any $H_n$, ignoring the $k$-th coordinate there exist $S_i$ such that $H_n(j) \geq S_i(j)$ for $j=1...(k-1)$. If $H_n(k) \geq S_i(k)$ then $H_n - S_i \geq 0$ as required. If $H_n(k)=m < S_i(k) \leq M$ then $H_n$ is a point in the subsequence $\{H_n^{(m)}\}$ and hence there exists $S_i^{(m)}$ such that $H_n - S_i^{(m)} \geq 0$ as required.
\end{proof}
We can now prove the main theorem of this article:
\begin{proof}[Proof of Theorem \ref{mainthm}]
For each pair $(E, L$) of branched surfaces in Theorem \ref{CG}, if $S=S_1 + S_2$ is a strongly irreducible Heegaard surface carried by $E$ then there are only finitely many possibilities for $S_1$. Let $\{H^n\}$ be a sequence of all strongly irreducible Heegaard surfaces carried by $E$ with $H^n_1 = S_1$. Applying Lemma \ref{comb} to the coefficients of $H^n$ with respect the fundamental surfaces of $E$, there are finitely many $\{S^i\}$ in this sequence such that for any $H^n$ there exists some $S^i$ with $H^n - S^i = K$ a closed surface carried by $L$. By Theorem 2 of \cite{Oer}, $K$ is incompressible. Let $K_j$ be the collection of fundamental surfaces of the finitely many sub-branched surfaces $L$. Expressing $K$ in terms of these $K_j$ we get the required result.
\end{proof}

\begin{remark}
As the bound $K^*$ and the branched surfaces $(E, L)$ used in Theorem \ref{CG} are all effectively constructible so the finitely many strongly irreducible surfaces $S_i$ and incompressible surfaces $K_j$ in Theorem \ref{mainthm} are also effectively constructible.
 \end{remark}
 
In all known examples of manifolds which have infinitely many irreducible Heegaard surfaces, the Heegaard surfaces are of the form $S+nK$ \cite{Li, MorSchSed}. As a corollary of Theorem \ref{mainthm}, it is easy to observe a weak version of this phenomenon.

\begin{corollary}
Let $M$ be a closed hyperbolic manifold. If $M$ has infinitely many strongly irreducible Heegaard surfaces then it has infinitely many strongly irreducible Heegaard surfaces or their one-sided associates which are the Haken sum $S+K^i$ where $S$ is a strongly irreducible Heegaard surface or its one-sided associate and $K^i$ are incompressible surfaces all fully carried by a fixed branched surface.
\end{corollary}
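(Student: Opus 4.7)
The plan is to combine Theorem \ref{mainthm} with a pigeonhole argument. First I would invoke the main theorem to produce the finite collection $\{S_1,\dots,S_n\}$ of strongly irreducible Heegaard surfaces or one-sided associates and the finite collection $\{K_1,\dots,K_m\}$ of incompressible surfaces. By hypothesis there are infinitely many strongly irreducible Heegaard surfaces; each of them, or its one-sided associate, can be written as $S_i + \sum_j n_j K_j$ for some index $i$. Since the index set is finite, pigeonhole gives a single $S := S_i$ such that infinitely many strongly irreducible Heegaard surfaces, or their one-sided associates, are of the form $S + \sum_j n_j K_j$.

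Next I would trace through the construction in the proof of Theorem \ref{mainthm}. The chosen $S$ comes from a specific pair $(E,L)$ from Theorem \ref{CG}, and the $K_j$ arising in the expression $S + \sum_j n_j K_j$ are (a subcollection of) the fundamental surfaces of the sub-branched surface $L$. Consequently, setting $K^i := \sum_j n_j K_j$ for each of the infinitely many Heegaard surfaces singled out above, every $K^i$ is fully carried by the single fixed branched surface $L$. This already identifies the fixed branched surface referred to in the statement.

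Finally I would rule out the degenerate case $K^i = \emptyset$: this forces the Heegaard surface (or its one-sided associate) to equal $S$ itself, so at most one of the infinitely many Heegaard surfaces under consideration produces an empty $K^i$. Discarding this exceptional surface, the remaining infinite subfamily has $K^i$ non-empty, and the last sentence of Theorem \ref{mainthm} ensures each such $K^i$ is incompressible. This gives exactly the conclusion of the corollary.

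I do not expect a genuine obstacle here; the corollary is a pigeonhole repackaging of Theorem \ref{mainthm}. The only mildly subtle point is bookkeeping: one must verify that the $K_j$ that actually appear for a given $S_i$ are the fundamental surfaces of the associated $L$ (so they share a common carrying branched surface), and that collecting them into a single aggregate $K^i$ preserves the incompressibility guaranteed by the main theorem.
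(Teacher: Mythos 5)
Your overall route is the same as the paper's: pigeonhole over the finite data produced by Theorem \ref{mainthm} (equivalently, over the finitely many pairs $(E,L)$ of Theorem \ref{CG}) to fix a single $S$, then aggregate the remainders into surfaces $K^i$. The genuine gap is in your second step, where you assert that each $K^i=\sum_j n_j K_j$ is \emph{fully} carried by the fixed sub-branched surface $L$. Fully carried means carried with strictly positive weight on every branch; a non-negative combination of the fundamental surfaces of $L$ in which some coefficients vanish is in general only carried by a proper sub-branched surface of $L$, and worse, the support $\{j : n^i_j>0\}$ can vary with $i$, so the $K^i$ need not even share a common minimal carrying sub-branched surface. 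This is precisely what the clause ``all fully carried by a fixed branched surface'' in the statement is about, and it is where the paper does its (small amount of) work: pass to a further subsequence on which the set of fundamental surfaces of $L$ appearing with positive coefficient is constant (pigeonhole over the finitely many subsets of the index set), discard the $K_j$ with zero coefficient, and let $L'$ be the smallest sub-branched surface carrying the surviving $K_j$; since all remaining coefficients are positive, each $K^i$ is then fully carried by $L'$. Your treatment of the degenerate case $K^i=\emptyset$ and the appeal to the last sentence of Theorem \ref{mainthm} for incompressibility are fine; inserting the extra pigeonhole on supports makes your argument agree with the paper's.
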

\begin{proof}
Given infinitely many strongly irreducible Heegaard surfaces or their one-sided associates carried by a fixed branched surface $E$, by ignoring the fundamental surfaces of $L$ with zero coefficients we can pass to a subsequence $S^i=S+\sum_j n^i_j K_j$ where for each $j$, $n^i_j$ is a sequence of positive numbers. Let $L'$ be the smallest sub-branched surface of $E$ that carries all these $K_j$. Then each $K^i = \sum_j n^i_j K_j$ is an incompressible surface fully carried by $L'$.
\end{proof}

With respect to a triangulation, strongly irreducible splitting surfaces can be put in almost normal form and with respect to a hyperbolic metric, they can be isotoped to  either index $\leq 1$ minimal surfaces or to the double cover of an index-0 surface with an unknotted tube connecting the sheets. There is no such nice form for weakly reducible splitting surfaces, so Question 7.10 of \cite{ColGab} is still open in its full generality.

\begin{acknowledgements}
We would like to thank David Gabai for several helpful comments and suggestions. The author was partially supported by the MATRICS grant of Science and Engineering Research Board, GoI.
\end{acknowledgements}

\bibliographystyle{amsplain}

\end{document}